\documentclass[11pt,reqno,a4paper]{amsart}

\usepackage[latin1]{inputenc}
\usepackage{tikz-cd}
\usepackage{a4wide}
\usepackage{amssymb}
\usepackage{amsfonts}
\usepackage{enumerate}
\usepackage{amsmath}
\usepackage{bbm}
\usepackage{url}

\setcounter{MaxMatrixCols}{10}
\usepackage{ stmaryrd }
\usepackage{xcolor}
\newtheorem{theorem}{Theorem}

\newtheorem{definition}[theorem]{Definition}

\newtheorem{lemma}[theorem]{Lemma}

\def\eps{\varepsilon}

\def\R{\mathbb{R}}

\def\E{\mathbb{E}}

\DeclareMathOperator{\var}{var}

\title[Shortest distance between observed orbits in distinct Dynamical Systems]{Shortest distance between observed orbits in distinct Dynamical Systems}

\author{Vanessa Barros and Adriana Coutinho}\address{Vanessa Barros\\Departamento de Matemática, Universidade Federal da Bahia\\Av. Ademar de Barros s/n, 40170-110 Salvador, Brazil}\email{vbarrosoliveira@gmail.com}\urladdr{https://vbarrosoliveira.com}\address{Adriana Coutinho\\Département d'Informatique, Institut Supérieur de Technologie (IST)\\Complexe Universitaire de Bikelé-Nzong, B.P. 1378 Libreville, Gabon}\email{adcoutinho.s@gmail.com}

\begin{document}
\maketitle

\begin{abstract}
In this paper, we investigate the asymptotic behavior of the shortest distance between observed orbits in two distinct dynamical systems. 
Given two measure-preserving transformations $(X, T, \mu)$ and $(X, S, \eta)$ and a Lipschitz observation function $f$, we define
\[
\widehat{m}_n^f(x,y) = \min_{i=0,\ldots,n-1} d\big(f(T^i x), f(S^i y)\big).
\]
Under suitable mixing assumptions, we show that the asymptotic rate of decay of $\widehat{m}_n^f(x,y)$ is governed by the symmetric Rényi  divergence of the pushforward measures $f_*\mu$ and $f_*\eta$. 
Our results generalize previous work that consider either a single system or the unobserved case. 
In addition, we discuss the extension of these results to random dynamical systems and illustrate the applicability of the approach with an example.
\end{abstract}

\section{Introduction}

In recent years, the analysis of the behavior of the shortest distance between orbits in dynamical systems has proven to be an important tool for understanding the statistical pro-\\perties of these systems. 
This type of dynamical analysis is related to classical problems in information theory, such as the identification of the largest block of coincidence between two symbolic sequences --- the so-called {Longest Common Substring} problem. 
Motivated by this framework, which includes the study of patterns in DNA sequences and random processes, our work aims to expand the understanding of orbit coincidence by connecting concepts from dynamics and sequence analysis in a more general setting.

The classical study by Arratia and Waterman~\cite{AW} established fundamental results on sequence matching  in the probabilistic setting, motivated by applications in biology and computer science. 
This study, which connects the analysis of recurrent patterns and string matching problems, became a reference for subsequent formulations in the dynamical systems framework.

Following this line, in~\cite{BLR}, a generalization of the {Longest Common Substring} 
problem to the dynamical systems context was proposed through the study of the behavior 
of the shortest distance between two orbits. In this setting, the presence of long common substrings in symbolic sequences 
can be reinterpreted in terms of the approximation between trajectories generated 
by a transformation.

{Results on matching blocks and minimal distance functions have also been obtained for continued fractions \cite{ShiZhou}, conformal iterated function systems \cite{Shi} and more general iterated function systems including classical examples such as Lüroth, $N$-ary expansions, and the Cantor system \cite{ShiTanZhou}. Additionally, the statistical distribution of the closest encounter between observations along different trajectories has also been analyzed in rapidly mixing systems, revealing Gumbel-type laws and connections with extremal indices and matching blocks \cite{Caby}.}
Recent algorithmic developments have further extended this classical formulation by allowing gaps and introducing connections with co-linear chaining techniques in computational biology, as discussed in \cite{banerjee}. 
{From an algorithmic point of view, dynamic variants of the Longest Common Substring problem have been explored, for example in \cite{Amir}. Minimal-distance phenomena have also been investigated in measure-preserving systems, where almost sure orbit closeness under mixing assumptions has been established \cite{Kirsebom}.}

In~\cite{coutinho}, the authors analyze the asymptotic behavior of the shortest distance between observed orbits in a single dynamical system, assuming suitable mixing properties for H\"older observables. 
In contrast, the present work extends this analysis to the case of two distinct systems, requiring the corresponding condition to hold only for Lipschitz observables.  
Our assumption is therefore weaker in terms of regularity, leading to results that hold under less restrictive hypotheses. 
{We also note that the mixing framework considered in~\cite{coutinho} and
in~\cite{BLR} requires an exponential decay of correlations,} given by $\theta_n = a^{\, n}$ with 
$0 \leq a < 1$. In the present work, however, we allow a {stretched exponential} 
rate of the form
\[
\theta_n = a^{\, n^{\alpha}}, \qquad 0 \leq a < 1,\; 0 < \alpha < 1,
\]
which is  weaker and still provides sufficient control over the dependence 
between the observed orbits.

{Compared with \cite{BLR}, where the shortest distance problem is studied within a single
dynamical system and its asymptotic behavior is governed by the correlation dimension
of the invariant measure, here we deal with two distinct measure-preserving dynamical
systems and introduce an observation function $f$.
This leads naturally to a characterization in terms of the symmetric Rényi divergence
between the pushforward measures $f_*\mu$ and $f_*\eta$. It is important to notice that working with an observation $f$ allows us to study the shortest distance between two distinct random dynamical systems.
This shows the present framework goes beyond that of \cite{BLR}, not only in the
nature of the systems and the limiting geometric quantity, but also in the level of
dependence required between observations.}


Later, in~\cite{BarrosRousseau}, the analysis of the shortest distance problem was extended to the setting of multiple orbits and linked to generalized fractal dimensions, providing a broader dynamical framework that also encompasses random systems and symbolic sequences. 

{In the random setting, \cite{Rousseau21} studied the quenched behavior of the longest common substring and its relation to the Rényi entropy.}
Continuing this line of investigation,~\cite{RT} analyze the
shortest distance between orbits in dynamical systems with slow
mixing properties. They show that the decay of this minimal distance still follows
a polynomial law in $n$ for several slowly mixing systems, thereby extending and
refining the results of~\cite{BLR}. A key ingredient in their approach is the use of
inducing schemes, which allow one to transfer quantitative estimates obtained for
the induced system to the original one. This method yields new classes of examples
where the scaling of orbit proximity can be rigorously established, even in settings
where no standard mixing estimates are available.

Subsequently,~\cite{GRS} investigated the shortest distance between orbits in random dynamical systems with strong mixing properties. 
While earlier work focused on deterministic dynamics or on the {annealed} formulation of the problem, these authors analyzed the more delicate {quenched} case and showed that the  asymptotic behavior is more intricate, involving two distinct correlation dimensions and exhibiting a non-smooth dependence of the asymptotic exponent.

Recently, in~\cite{ShiLiuZhang} and~\cite{Kirsebom}, the authors analyzed the asymptotic behavior of the shortest distance between orbits in different dynamical systems, defined by $(X, T, \mu)$ and $(X, S, \eta)$, using a compatible metric $d$.

They showed that, under certain mixing conditions, the quantity
\[
\widehat{m}_n(x,y) = \min_{i=0,\ldots,n-1} d\big(T^i x, S^i y\big),
\]
converges, and that its limit is related to the symmetric Rényi divergence of the measures $\mu$ and $\eta$.
Motivated by~\cite{ShiLiuZhang} and~\cite{coutinho}, we extend this framework by introducing a measurable observation function $f$ and studying the asymptotic behavior of the shortest distance between {observed orbits} in different dynamical systems, defined as
\[
\widehat{m}_n^f(x,y) = \min_{i=0,\ldots,n-1} d\big(f(T^i x), f(S^i y)\big).
\]

It is worth noting that, in the framework of~\cite{coutinho} { as in~\cite{BLR}}, the minimal 
distance is computed by allowing the two trajectories to be compared at different time 
instants, that is, $i \neq j$.
In the present work, by contrast, the comparison is restricted to matching times. 
Because we deal simultaneously with two different dynamical systems and an observation 
function~$f$, these ingredients already introduce substantial technical difficulties, and 
the available tools permit us to treat only the synchronous case.

In this way, we investigate how the asymptotic behavior of the shortest distance between observed orbits is related to the symmetric Rényi divergence of the pushforward measures $f_* \mu$ and $f_* \eta$. 
That is, if the symmetric Rényi divergence  $C^f_{\mu,\eta}$ exists, and under mixing conditions for both systems $(X,\mu,T)$ and $(X,\eta,S)$, we deduce that for $\mu \otimes \eta$-almost every  $(x,y) \in X \times X$,
\[
\lim_{n \to +\infty} \frac{\log \widehat{m}_n^f(x,y)}{-\log n} = \frac{1}{C^f_{\mu,\eta}}.
\]
If $f$ is the identity function, we extend the study of orbit coincidence to the case of two distinct dynamical systems, as presented in~\cite{ShiLiuZhang}, under mixing conditions and for Lipschitz functions. 
This class of observables is particularly relevant for meaningful applications.

The use of the symmetric Rényi divergence in shortest-distance problems  was introduced in~\cite{KO} and later formalized in~\cite{abadi-lambert}
as a refinement of earlier notions based on correlation dimension. 
While the correlation dimension captures averaged geometric overlap and is
commonly used to characterize the fractal structure of an underlying attractor,
it does not directly describe the decay of matching probabilities between
observations. In contrast, the symmetric Rényi divergence provides an
information-theoretic characterization of the exponential rate at which such
matching probabilities decrease, a distinction that becomes particularly
relevant when shortest-distance problems involve different observations or
invariant measures.

The paper is organized as follows. 
In Section~\ref{secshort}, we introduce the main definitions and state our principal results. 
Section~\ref{sec-proof} is devoted to the proofs and technical lemmas. 
Finally, in Section~\ref{sectionshortestrandom}, we extend our results to random dynamical systems and provide an example that illustrates their practical relevance.

\medskip

\section{Shortest distance between orbits}\label{secshort} 


Let \((X, d)\) be a finite-dimensional metric space and \(\mathcal{A}\) its Borel \(\sigma\)-algebra.  
Let \((X, \mathcal{A}, \mu, T)\) and \((X, \mathcal{A}, \eta, S)\) be two measure-preserving dynamical systems, and let \(f: X \to Y \subset \R^n\) be a measurable function, called the observation.   
We aim to study the behavior of the shortest distance between two different observed orbits:  
\[
\widehat{m}_n^f(x, y) = \min_{i = 0, \dots, n-1}  d(f(T^i x), f(S^i y)).
\]  
We will show that the asymptotic behavior of \(\widehat{m}_n^f\) as \(n \to \infty\) is related to the symmetric Rényi divergence  (defined below) of the pushforward measures \(f_*\mu\) and \(f_*\eta\). Recall that the pushforward measure \(f_*\mu\) is given by
\(f_*\mu(\cdot) := \mu(f^{-1}(\cdot))\).

\begin{definition}
Let \((X, \mathcal{A}, \mu, T)\) and \((X, \mathcal{A}, \eta, S)\) be measure-preserving systems. We define the lower and upper symmetric Rényi divergence of the measures \(\mu\) and \(\eta\) as
\[
\underline{C}_{\mu, \eta} = \liminf_{r \to 0} \frac{\log \int_X \mu(B(x, r)) \, d\eta(x)}{\log r}
\quad \text{and} \quad
\overline{C}_{\mu, \eta} = \limsup_{r \to 0} \frac{\log \int_X \mu(B(x, r)) \, d\eta(x)}{\log r}.
\]

\end{definition}

For simplicity, we denote the lower and upper symmetric Rényi divergence of the pushforward measures \(f_*\mu\) and \(f_*\eta\) by
\[\underline{C}^f_{\mu, \eta} =\underline{C}_{f_*\mu, f_*\eta} \quad \text{and} \quad \overline{C}^f_{\mu, \eta} =\overline{C}_{f_*\mu, f_*\eta}.\]
When the \(\liminf\) and \(\limsup\) coincide, we simply write \(C^f_{\mu, \eta}\) for the common value. Observe that
\(
C_{\mu,\eta} = C_{\eta,\mu},
\)
which motivates the terminology {symmetric}.

Our first result shows that  for sufficiently large $n,\ \widehat{m}^f$ cannot decay faster than \(n^{-1/\underline{C}^f_{\mu, \eta}}\):

\begin{theorem}\label{thineq}
Let \((X, \mathcal{A}, \mu, T)\) and \((X, \mathcal{A}, \eta, S)\) be two measure-preserving systems. Consider an observation \(f : X \to Y\) such that \(\underline{C}^f_{\mu, \eta} > 0\). Then, for \(\mu \otimes \eta\)-almost every \((x, y) \in X \times X\), we have
\[
\underset{n \to +\infty}{\overline\lim} \frac{\log \widehat{m}_n^f(x, y)}{-\log n} \leq \frac{1}{\underline{C}^f_{\mu, \eta}}.
\]
\end{theorem}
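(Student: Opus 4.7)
Set $C=\underline{C}^f_{\mu,\eta}>0$ and fix a small $\epsilon>0$. The plan is a first Borel-Cantelli argument that requires only the measure-preservation of $T$ on $(X,\mu)$ and $S$ on $(X,\eta)$ together with the positivity of $C$; notably, no mixing hypothesis is used for this half of the statement. I aim to show that for $\mu\otimes\eta$-a.e.\ $(x,y)$ the inequality $\widehat{m}_n^f(x,y)\geq n^{-1/C-\epsilon}$ eventually holds along a dyadic subsequence, and then pass to all $n$ by monotonicity of $n\mapsto\widehat{m}_n^f(x,y)$.

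The key reduction is that the map $(x,y)\mapsto(T^i x,S^i y)$ preserves $\mu\otimes\eta$, so that for every $i\geq 0$ and $r>0$,
\[
(\mu\otimes\eta)\{(x,y): d(f(T^i x),f(S^i y))<r\}
=\int (f_*\mu)(B(v,r))\,d(f_*\eta)(v),
\]
by the pushforward change of variables. This is precisely the integral appearing in the definition of $\underline{C}^f_{\mu,\eta}$, so by the $\liminf$ definition, for any $\delta>0$ and all sufficiently small $r$ the quantity above is at most $r^{\,C-\delta}$.

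A union bound over $i=0,\dots,n-1$ with $r_n=n^{-1/C-\epsilon}$ then gives
\[
(\mu\otimes\eta)\bigl\{\widehat{m}_n^f<r_n\bigr\}\leq n\cdot r_n^{C-\delta}=n^{\,1-(1/C+\epsilon)(C-\delta)}.
\]
Choosing $\delta$ small enough in terms of $\epsilon$, the exponent is at most $-\epsilon C/2<0$, but the resulting probability $n^{-\epsilon C/2}$ is not summable. To fix this I restrict to the dyadic subsequence $n_k=2^k$, along which $\sum_k n_k^{-\epsilon C/2}<\infty$; Borel-Cantelli therefore yields, almost surely, $\widehat{m}_{n_k}^f(x,y)\geq r_{n_k}$ for all $k$ large enough. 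Monotonicity of $\widehat{m}_n^f$ in $n$ gives $\widehat{m}_n^f(x,y)\geq r_{n_k}$ for every $n\in[n_{k-1},n_k]$, whence
\[
\frac{\log\widehat{m}_n^f(x,y)}{-\log n}\leq\Bigl(\tfrac{1}{C}+\epsilon\Bigr)\frac{\log n_k}{\log n}\leq\Bigl(\tfrac{1}{C}+\epsilon\Bigr)\frac{k}{k-1}.
\]
Sending $k\to\infty$ followed by $\epsilon\to 0$ yields the theorem.

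The main obstacle is precisely that the natural union bound produces only a polynomially small probability in $n$, falling short of direct first Borel-Cantelli. The dyadic sparsification paired with the monotonicity of $\widehat{m}_n^f$ is the device that closes the gap, at the harmless cost of the factor $k/(k-1)\to 1$. Beyond this, the only care needed is to check that the invariance step produces a quantity independent of $i$ and matching exactly the integral defining $\underline{C}^f_{\mu,\eta}$, which is immediate from the product dynamics together with the pushforward change of variables.
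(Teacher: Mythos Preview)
Your proof is correct and follows essentially the same route as the paper: compute the probability of $\{\widehat{m}^f_n<r_n\}$ via invariance and the pushforward integral, then apply a first Borel--Cantelli along a sparse subsequence and interpolate by monotonicity. The only difference is in calibration: the paper takes $r_n=(n\log n)^{-1/(\underline{C}^f_{\mu,\eta}-\epsilon)}$, which yields a bound of order $1/\log n$ and forces the heavier subsequence $n_\ell=\lceil e^{\ell^2}\rceil$, whereas your choice $r_n=n^{-1/C-\epsilon}$ gives a polynomial bound $n^{-\epsilon C/2}$ that is already summable along the lighter dyadic subsequence $n_k=2^k$; either choice works, and yours makes the sandwich step slightly cleaner.
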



\medskip

Proving the equality is more delicate and less straightforward.
Obtaining a matching lower bound requires two additional assumptions,
which are not needed for the upper bound.

The first one, {(H1)}, is a mixing-type condition formulated in terms of decay of correlations for Lipschitz observables under the dynamics of both systems \((X, \mu, T)\) and \((X, \eta, S)\). It ensures that statistical dependencies between observations decay sufficiently fast in time:

(H1) There exist constants \(0 \le a < 1\) and \(\alpha > 0\) such that,
for all \( \psi, \phi \in \mathrm{Lip}(X,\mathbb{R}) \) and all \( n \in \mathbb{N}^* \),
we have
\[
\left| 
\int_X \psi(x)\,\phi(T^n x)\, d\mu 
- 
\int_X \psi\, d\mu \int_X \phi\, d\mu 
\right|
\leq 
\|\psi\|_{\mathrm{Lip}} \, \|\phi\|_{\mathrm{Lip}} \, \theta_n,
\]
and
\[
\left| 
\int_X \psi(x)\,\phi(S^n x)\, d\eta 
- 
\int_X \psi\, d\eta \int_X \phi\, d\eta 
\right|
\leq 
\|\psi\|_{\mathrm{Lip}} \, \|\phi\|_{\mathrm{Lip}} \, \theta_n,
\]
where $
\theta_n = a^{n^\alpha}.
$

The second one, {(H2)}, is a regularity condition on the pushforward measure \(f_*\mu\), which controls the measure of annuli in terms of their radii and thicknesses:

(H2) There exist $r_0>0$, $\xi\geq0$ and $\beta\geq 1$ such that for $f_*\mu$-almost every $y\in Y$ and any $r_0>r>\rho>0$,
\[f_*\mu(B(y,r+\rho)\backslash B(y,r-\rho))\leq r^{-\xi}\rho^\beta.\]
\noindent
Since the problem under consideration is symmetric in the two systems
$(X,\mu,T)$ and $(Y,\eta,S)$, it suffices to require the annuli regularity condition for only one of the measures.

\smallskip

Now we can state our main result.

\begin{theorem}\label{thprinc}
Let $(X,\mathcal{A},\mu,T)$ and $(X,\mathcal{A},\eta,S)$ be two measure-preserving systems satisfying \textnormal{(H1)} and \textnormal{(H2)}. Suppose that $f$ is a Lipschitz observation such that ${C}^f_{\mu,\eta}$ exists and is strictly positive. Then, for $\mu\otimes\eta$-almost every $(x,y) \in X \times X$, we have
\[
\lim_{n \to +\infty} \frac{\log \widehat{m}_n^f(x,y)}{-\log n} = \frac{1}{{C}^f_{\mu,\eta}}.
\]
\end{theorem}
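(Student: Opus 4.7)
By Theorem~\ref{thineq} and the existence of $C^f_{\mu,\eta}$, we already have
\[
\limsup_{n \to \infty} \frac{\log \widehat{m}_n^f(x,y)}{-\log n} \leq \frac{1}{C^f_{\mu,\eta}}
\]
for $\mu \otimes \eta$-a.e.\ $(x,y)$, so the task is the matching lower bound on the $\liminf$. Fix $\varepsilon > 0$, write $C = C^f_{\mu,\eta}$, and set $r_n = n^{-(1/C - \varepsilon)}$. Since $\widehat{m}_n^f$ is non-increasing in $n$, it suffices to find a geometric subsequence $n_k = 2^k$ along which $\widehat{m}_{n_k}^f \leq r_{n_{k+1}}$ eventually a.s.: for $n \in [n_k, n_{k+1})$ this yields $\widehat{m}_n^f \leq \widehat{m}_{n_k}^f \leq r_{n_{k+1}} \leq r_n$, which forces $\liminf \log \widehat{m}_n^f/(-\log n) \geq 1/C - \varepsilon$, and $\varepsilon$ is arbitrary.

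The plan is to apply the second-moment method to the counting variable
\[
Z_k^*(x,y) = \sum_{i=0}^{n_k - 1} \mathbbm{1}\{d(f(T^i x), f(S^i y)) \leq r_{n_{k+1}}\},
\]
so that $\{\widehat{m}_{n_k}^f \leq r_{n_{k+1}}\} = \{Z_k^* \geq 1\}$. Writing $I(r) := \int_Y f_*\mu(B(z,r))\, d f_*\eta(z)$, invariance of $\mu$ under $T$ and of $\eta$ under $S$ together with Fubini give $\E[Z_k^*] = n_k \, I(r_{n_{k+1}}) = n_k^{\varepsilon C + o(1)} \to \infty$ from the existence of $C$. The same invariances reduce the joint probability at two times to a function $Q_\ell$ of the lag $\ell = |i - j|$, so
\[
\E[(Z_k^*)^2] = \E[Z_k^*] + 2 \sum_{\ell=1}^{n_k - 1}(n_k - \ell)\, Q_\ell,
\]
with $Q_\ell = (\mu\otimes\eta)\{d(f(x),f(y)) \leq r,\ d(f(T^\ell x), f(S^\ell y)) \leq r\}$ and $r = r_{n_{k+1}}$. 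To bound $Q_\ell$, I would replace each indicator $\mathbbm{1}_{B(z,r)}$ by a Lipschitz bump $\phi^+_{z,r,\rho}$ equal to $1$ on $B(z,r)$, supported in $B(z,r+\rho)$, with Lipschitz constant $1/\rho$. Freezing $y$ and applying (H1) to the two Lipschitz functions of $x$ and $T^\ell x$ (Lipschitz norms $\leq \Lip(f)/\rho$), then smoothing $y \mapsto f_*\mu(B(f(y),r+\rho))$ and applying (H1) a second time in the $S$-integral, yields
\[
Q_\ell \leq I(r + 2\rho)^2 + C_f\, \theta_\ell / \rho^2,
\]
with $C_f$ depending on $\Lip(f)$.

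I then choose $\rho = \rho_k$ as a small negative power of $r$ so that (H2)---which bounds $I(r+2\rho) - I(r)$ by $2^\beta r^{-\xi}\rho^\beta$---forces $I(r+2\rho)/I(r) \to 1$ at a polynomial rate in $n_k$, and split the sum over $\ell$ at a polylogarithmic threshold $L_k = (\log n_k)^{2/\alpha}$: use the crude bound $Q_\ell \leq I(r)$ for $\ell < L_k$, and the estimate above for $\ell \geq L_k$. The stretched-exponential tail $\sum_{\ell \geq L_k} a^{\ell^\alpha}$ is super-polynomially small, absorbing the factor $1/\rho_k^2$, and the diagonal band contributes at most $L_k \E[Z_k^*] \ll \E[Z_k^*]^2$; hence $\E[(Z_k^*)^2] \leq (1 + o(1))\, \E[Z_k^*]^2$, so $\var(Z_k^*)/\E[Z_k^*]^2$ decays polynomially in $n_k$. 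This is summable along $n_k = 2^k$, so Chebyshev and the first Borel--Cantelli lemma give $Z_k^* \geq \E[Z_k^*]/2 \geq 1$ eventually a.s., completing the proof via the monotonicity argument above. The main obstacle is the second-moment bound: two successive applications of mixing require two Lipschitz smoothings that must be balanced delicately against (H2), and the stretched-exponential rate $\theta_\ell = a^{\ell^\alpha}$ is only just enough for the polylogarithmic cutoff $L_k$ to absorb the accumulated mixing error $\theta_\ell/\rho_k^2$ without spoiling the main term $I(r+2\rho)^2$.
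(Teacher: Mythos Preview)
Your argument is correct and follows the same architecture as the paper's proof: a second-moment/Chebyshev bound on the counting variable, Lipschitz smoothing of indicators, a short-range/long-range split over the lag, two applications of (H1), use of (H2) to control the annular overshoot, and Borel--Cantelli along a subsequence.

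There are two technical choices where you diverge from the paper, both in your favour. First, by carrying the smoothed integrals $\Psi(z)=\int \phi^+_{z,r,\rho}\,df_*\mu$ through to the second application of (H1) (rather than passing immediately to $f_*\mu(B(f(y),r+\rho))$), you get Lipschitz norm $L/\rho$ directly and bypass the paper's Lemma~\ref{HA}. The paper instead invokes (H2) a second time, via that lemma, to show $y\mapsto f_*\mu(B(f(y),r))$ is Lipschitz with norm $cr^{-\zeta}$; your route is shorter. Second, your polylogarithmic cutoff $L_k=(\log n_k)^{2/\alpha}$ is the natural scale for a genuinely stretched-exponential rate $\theta_\ell=a^{\ell^\alpha}$ and yields $\var(Z_k^*)/\E[Z_k^*]^2\le n_k^{-\delta}$ for some $\delta>0$, which is summable already along the geometric subsequence $n_k=2^k$; the paper works harder with $n_\ell=\lceil e^{\ell^2}\rceil$ to sum a bound of size $(\log n)^{-1}$. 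One cosmetic point: ``small negative power of $r$'' should read ``small positive power of $r$'' (equivalently, a negative power of $n_k$), and in the monotonicity step you want $r_{n_{k+1}}\le r_n$ replaced by a comparison of $\log r_{n_{k+1}}/\log n_k$ with $\log r_n/\log n$, which is immediate since $\log n_{k+1}/\log n_k\to 1$.
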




\medskip

\section{Proofs of the main results}\label{sec-proof}

\begin{proof}[Proof of Theorem \ref{thineq}]

For $\epsilon > 0$, define  
\[
k_n = \frac{\log n + \log \log n}{\underline{C}^f_{\mu,\eta} - \epsilon}, \quad \text{and} \quad r_n = e^{-k_n}.
\]  
For each $i = 0, \ldots, n-1$, set
\begin{equation*}
A^f_{i}(y) = T^{-i}\big[f^{-1}(B(f(S^i y), r_n))\big],
\end{equation*}
and define the counting function
\begin{equation*}
Q^f_n(x, y) = \sum_{i=0}^{n-1} \mathbbm{1}_{A^f_i(y)}(x).
\end{equation*}
By construction,  
 
\[
Q^f_n(x, y) \geq 1 \Leftrightarrow  d\big(f(T^i x), f(S^i y)\big) < r_n \text{ for some } i.
\]
Since \(\widehat{m}^f_n(x, y)\) is  the minimum of these distances, we get
\begin{equation*} \label{eq:MnQn}
\{ (x, y) : \widehat{m}^f_n(x, y) < r_n\} = \{  (x, y) : Q^f_n(x, y) \geq 1\}.
\end{equation*}
Using Markov's inequality, we obtain  
\begin{equation} \label{eq:MnQn1}
\mu \otimes \eta \big((x, y) : \widehat{m}^f_n(x, y) < r_n\big)  
\leq \mathbb{E}(Q^f_n).
\end{equation}
Let
\[
B_n = \{(x, y) : \widehat{m}^f_n(x, y) < r_n\}.
\]  
Our goal is to show that there exists a suitable subsequence $(n_\ell)_\ell$ for which
\begin{equation} \label{B_n}
\sum_\ell \mu \otimes \eta(B_{n_\ell}) < +\infty.
\end{equation}
We now fix
\[
n_\ell = \lceil e^{\ell^2} \rceil, \qquad \ell \in \mathbb{N},
\]
and we shall verify later that this choice indeed satisfies \eqref{B_n}. 
Assuming for the moment that this is true,
the Borel--Cantelli Lemma implies that for $\mu \otimes \eta$-almost every $(x,y)\in X\times X$, 
for all $\ell$ sufficiently large,
\[
\widehat{m}^f_{n_\ell}(x,y)\ge r_{n_\ell},
\]
which is equivalent to
\begin{equation}\label{des1}
\frac{\log \widehat{m}^f_{n_\ell}(x,y)}{-\log n_\ell}
\le 
\frac{1}{\underline{C}^f_{\mu,\eta}-\epsilon}.
\end{equation}
Hence,
\[
\overline{\lim_{\ell\to\infty}}
\frac{\log \widehat{m}^f_{n_\ell}(x,y)}{-\log n_\ell}
\le 
\frac{1}{\underline{C}^f_{\mu,\eta}-\epsilon}.
\]

To extend the last estimate  to the whole sequence, note that for every $n$ there exists $\ell$ such that 
$e^\ell \le n \le e^{\ell+1}$, and since $(\widehat{m}^f_n)_n$ is decreasing,
\[
\frac{\log \widehat{m}^f_{n_\ell}(x,y)}{-\log n_{\ell+1}}
\le 
\frac{\log \widehat{m}^f_n(x,y)}{-\log n}
\le 
\frac{\log \widehat{m}^f_{n_{\ell+1}}(x,y)}{-\log n_\ell}.
\]
Taking upper limits, using 
$\displaystyle\lim_{\ell\to\infty}\frac{\log n_\ell}{\log n_{\ell+1}}=1$ and inequality \eqref{des1}
we get
\[
\overline{\lim_{n\to\infty}}
\frac{\log \widehat{m}^f_n(x,y)}{-\log n}
=
\overline{\lim_{\ell\to\infty}}
\frac{\log \widehat{m}^f_{n_\ell}(x,y)}{-\log n_\ell}
\le 
\frac{1}{\underline{C}^f_{\mu,\eta}-\epsilon}.
\]
and the theorem is proved since $\eps$ is arbitrary.

It remains to prove inequality  \eqref{B_n} for  \(n_\ell = \lceil e^{\ell^2} \rceil \). By inequality \eqref{eq:MnQn1}, it suffices to estimate \( \mathbb{E}(Q^f_n) \). Using  Fubini's theorem and the  invariance of $\mu$ under $T$ and $\eta$ under $S$, we conclude that
    \begin{eqnarray*}
\E(Q^f_n)&=&\iint
\sum_{i=0}^{n-1}
\mathbbm{1}_{A^f_{i}(y)}(x)d\mu\otimes\eta(x,y)\nonumber \\
&=&\sum_{i=0}^{n-1}\int \mu\left(T^{-i}f^{-1}B(f(S^iy),r_n)\right)d\eta(y)\nonumber \\
&=&n\int f_{\ast}\mu\left(B(z,r_n)\right)d(f_{\ast}\eta)(z).
\end{eqnarray*}
From the definitions of the lower symmetric Rényi divergence and of \(k_n\), 
we obtain for large \(n\) that 
\[
\mu \otimes \eta \left(B_n\right) \leq n r_n^{\underline{C}^f_{\mu, \eta} - \epsilon} = \frac{1}{\log n}.
\]
Finally, we replace \(n_\ell = \lceil e^{\ell^2} \rceil\), and get
\[
\mu \otimes \eta \left(B_{n_\ell}\right) \leq \frac{1}{\log n_\ell} \leq \frac{1}{\ell^2},
\]
which proves \eqref{B_n}.

\end{proof}


We are now ready to prove Theorem~\ref{thprinc}.  
We will need the following result, which controls the push-forward measure of balls in the space of Lipschitz-continuous functions. The proof follows the same strategy as in~\cite{coutinho}.   

While the structure of the argument is similar to the original one, the statement is not identical. 
Our version is formulated in terms of Lipschitz continuity rather than H\"older regularity.

\begin{lemma}\label{HA}
Let \( (X, \mathcal{A}, \mu, T) \) be a dynamical system with observation \( f \).  
If it satisfies condition~\((H2)\), then there exist constants \( 0 < r_0 < 1 \), \( c \geq 0 \), and \( \zeta \geq 0 \) such that for any \( 0 < r < r_0 \), the function
\[
\psi_1: x \mapsto f_*\mu \big( B(f(x), r) \big)
\]
belongs to \( \mathrm{Lip}(X, \mathbb{R}) \), and
\[
\| \psi_1 \|_{\mathrm{Lip}} \leq c r^{-\zeta} \ .
\]
\end{lemma}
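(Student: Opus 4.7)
The plan is to bound the difference $|\psi_1(x)-\psi_1(x')|$ by comparing the two balls $B(f(x),r)$ and $B(f(x'),r)$ geometrically, and then to invoke the annuli regularity (H2) on $f_*\mu$. Since the lemma is intended for the setting of Theorem~\ref{thprinc}, I may assume throughout that $f$ is Lipschitz with constant $L_f$. First I would set $\rho = L_f\, d(x,x')$, so that $d(f(x),f(x')) \leq \rho$, and observe via the triangle inequality that
\[
B(f(x),r)\,\triangle\, B(f(x'),r) \;\subset\; B(f(x), r+\rho)\setminus B(f(x), r-\rho),
\]
whenever $\rho < r < r_0$. Applying (H2) would then yield
\[
|\psi_1(x)-\psi_1(x')| \;\leq\; f_*\mu\!\left(B(f(x),r+\rho)\setminus B(f(x),r-\rho)\right) \;\leq\; r^{-\xi}\rho^{\beta}.
\]

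Next I would exploit the hypothesis $\beta \geq 1$ to linearize this estimate. Since $\rho < r < r_0 < 1$, one has $\rho^{\beta} = \rho^{\beta-1}\rho \leq r^{\beta-1}\rho$, and therefore
\[
|\psi_1(x)-\psi_1(x')| \;\leq\; L_f\, r^{\beta-1-\xi}\, d(x,x').
\]
In the complementary regime $\rho \geq r$, I would instead combine the trivial bound $0 \leq \psi_1 \leq 1$ with $d(x,x') \geq r/L_f$ to obtain
\[
|\psi_1(x)-\psi_1(x')| \;\leq\; 1 \;\leq\; \frac{L_f}{r}\, d(x,x').
\]
Taking the larger of the two estimates produces a Lipschitz constant of the form $c\, r^{-\zeta}$ with $\zeta = \max(1,\, 1+\xi-\beta)$, and, together with the sup-norm bound $\|\psi_1\|_\infty \leq 1$, this yields the claimed inequality $\|\psi_1\|_{\Lip} \leq c\, r^{-\zeta}$.

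The main obstacle is a measure-theoretic subtlety rather than a deep analytic step: condition~(H2) is valid only for $f_*\mu$-almost every center $y$, so the annulus estimate above holds a priori only when $f(x)$ belongs to a certain full-$f_*\mu$-measure subset of $Y$. My plan to address this is twofold: either to redefine $\psi_1$ on the corresponding $\mu$-null exceptional subset of $X$ so as to obtain a globally Lipschitz function on $X$, or, more cleanly, to note that $\psi_1$ enters the subsequent arguments of Section~\ref{sec-proof} only through correlation integrals with respect to $\mu$ and $\eta$, where $\mu$-a.e.\ equality to a Lipschitz function is enough to invoke hypothesis~(H1).
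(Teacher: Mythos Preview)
Your proposal is correct and is precisely the standard argument that the paper intends: the authors do not spell out the proof but state that it ``follows the same strategy as in~\cite{coutinho}'', and that strategy is exactly to trap the symmetric difference of the two balls in an annulus centered at $f(x)$, apply~(H2), and then linearize using $\beta\ge 1$ together with a trivial bound in the regime $\rho\ge r$. Your observation about the $f_*\mu$-a.e.\ restriction in~(H2) is a genuine subtlety that the paper (and, presumably,~\cite{coutinho}) glosses over; your proposed fix---modifying $\psi_1$ on a $\mu$-null set, which is harmless for the correlation integrals in which $\psi_1$ is used under~(H1)---is the right way to handle it.
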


\begin{proof}[Proof of Theorem \ref{thprinc}]

First, note that it is sufficient to prove the following inequality:
\[
\underset{n \rightarrow +\infty}{\underline\lim} \frac{\log \widehat{m}^f_n(x, y)}{-\log n} \geq \frac{1}{\overline{C}_{\mu, \eta}^f}.
\]

For \(\epsilon > 0\), define
\[
k_n = \frac{1}{\overline{C}^f_{\mu,\eta} + \epsilon}(\log n + b \log \log n) \quad \text{and} \quad r_n = e^{-k_n},
\]
where \(b\) will be chosen appropriately later in the proof.

Our goal now is to estimate
\begin{equation*}
\mu \otimes \eta \left( (x, y) : \widehat{m}^f_n(x, y) \geq r_n \right),
\end{equation*}
and then apply the Borel-Cantelli Lemma, as we did in the proof of Theorem~\ref{thineq}.
Proceeding similarly to that proof, and applying Chebyshev's inequality, we obtain
\begin{align}
&\mu \otimes \eta \left((x, y) : \widehat{m}^f_n(x, y)\geq r_n\right) \leq\mu\otimes\eta \left((x,y):Q^f_n(x,y)=0\right) \nonumber \\
\leq&\mu\otimes\eta \left((x,y):|Q^f_n(x,y)-\E(Q^f_n)|\geq |\E(Q^f_n)|\right)
\leq\frac{\var(Q^f_n)}{\E(Q^f_n)^2}, \label{eqmnvarsn}
\end{align}
where \(Q^f_n\) is given by
\begin{equation*}
Q^f_n(x, y) = \sum_{i=0}^{n-1} \mathbbm{1}_{A^f_i(y)}(x),
\end{equation*} 
and 
\begin{equation*}
A^f_{i}(y) = T^{-i}\big[f^{-1}(B(f(S^i y), r_n))\big].
\end{equation*}

\medskip

Thus, we need to control the variance of $Q^f_n$. First of all, we have
\begin{align}\label{varestimate}
\var(Q^f_n)
=&\sum_{i,i'=0}^{n-1}\mathrm{Cov}(\mathbbm{1}_{A^f_{i}},\mathbbm{1}_{A^f_{i'}})=\sum_{i,i'=0}^{n-1}\left(\iint \mathbbm{1}_{A^f_{i}}\mathbbm{1}_{A^f_{i'}}-\iint\mathbbm{1}_{A^f_{i}}\iint\mathbbm{1}_{A^f_{i'}}\right)\\
=&\sum_{i,i'=0}^{n-1}\iint \mathbbm{1}_{f^{-1}B(f(S^iy),r_n)}(T^ix)\mathbbm{1}_{f^{-1}B(f(S^{i'}y),r_n)}(T^{i'}x)\, d\mu\otimes\eta(x,y)\nonumber\\
&-\E(Q^f_n)^2,\nonumber
\end{align}
since
$
\displaystyle\sum_{i=0}^{n-1}\iint \mathbbm{1}_{A^f_i}(x,y) \, d\mu\otimes\eta(x,y) = \mathbb{E}(Q^f_n).
$

\medskip
We next focus on estimating the double sum in~\eqref{varestimate}. Because the indicator function is not Lipschitz continuous, our hypothesis (H1) does not apply directly. However, we can approximate the indicator function by a sequence of Lipschitz functions. We now proceed to construct such a sequence (see, e.g.,~\cite{rapid}). 

\medskip
Let \(\eta_{r_n} : [0, \infty) \to \mathbb{R}\) be defined by:
\[
\eta_{r_n}(t) = 
\begin{cases}
1, & \text{if } 0 \leq t \leq r_n, \\
1 - \dfrac{t - r_n}{\rho r_n}, & \text{if } r_n < t \leq (1 + \rho)r_n, \\
0, & \text{if } t > (1 + \rho)r_n,
\end{cases}
\]
where \(\rho > 0\) will be defined later. 
\medskip

Note that \(\eta_{r_n}\) is a \(\frac{1}{\rho r_n}\)-Lipschitz function, and satisfies the inequality:
\begin{equation}\label{desigualdade-eta}
\mathbbm{1}_{[0, r_n]} \leq \eta_{r_n} \leq \mathbbm{1}_{[0, (1+\rho)r_n]}.
\end{equation}

Given \( z \in Y \), define \( \varphi_{z, r_n} : Y \to \mathbb{R} \) by
$
\varphi_{z, r_n}(x) = \eta_{r_n}(d(z, x)).$
Since \( f \) is Lipschitz, it follows that \( \varphi_{f(y), r_n} \circ f \) is 
\[
\frac{L}{\rho r_n}\text{-Lipschitz}
\]
for any \( x \in X \), where \( L \) denotes the minimal Lipschitz constant of \( f \).

Moreover from inequality \eqref{desigualdade-eta}, we have

\begin{eqnarray}\label{eqapprox}
\mathbbm{1}_{f^{-1}B(f(S^iy),r_n)}(x) =\mathbbm{1}_{[0,r_n]}(d(f(S^iy),f(x)))
 \leq  \varphi_{f(S^iy),r_n}(f(x)).
\end{eqnarray}

We will consider two different cases. Let us fix \( g = g(n) := \log(n^\gamma) \) for some \( \gamma > 0 \) to be chosen later.

\medskip

\noindent\textbf{Case 1:} \( |i - i'| > g \). From the mixing property and inequality~\eqref{eqapprox}, we obtain:
\begin{align*}
&\iint \mathbbm{1}_{f^{-1}B(f(S^i y), r_n)}(T^i x) \cdot \mathbbm{1}_{f^{-1}B(f(S^{i'} y), r_n)}(T^{i'} x) \, d(\mu \otimes \eta)(x,y) \\
&= \iint \mathbbm{1}_{f^{-1}B(f(S^i y), r_n)}(T^{i-i'} x) \cdot \mathbbm{1}_{f^{-1}B(f(S^{i'} y), r_n)}(x) \, d(\mu \otimes \eta)(x,y) \\
&\leq \iint \varphi_{f(S^i y), r_n}(f(T^{i - i'} x)) \cdot \varphi_{f(S^{i'} y), r_n}(f(x)) \, d(\mu \otimes \eta)(x,y).
\end{align*}

Using Hypothesis (H1) and inequality~\eqref{desigualdade-eta} we obtain:
\begin{align*}
&\iint \mathbbm{1}_{f^{-1}B(f(S^i y), r_n)}(T^i x) \cdot \mathbbm{1}_{f^{-1}B(f(S^{i'} y), r_n)}(T^{i'} x) \, d(\mu \otimes \eta)(x,y) \\
&\leq \int \left( \int \varphi_{f(S^i y), r_n}(f(T^{i - i'} x)) \, d\mu(x) \cdot \int \varphi_{f(S^{i'} y), r_n}(f(x)) \, d\mu(x) \right) d\eta(y) \\
&\quad + \theta_g \cdot \|\varphi_{f(S^i y), r_n}\circ f\|_{\mathrm{Lip}} \cdot \|\varphi_{f(S^{i'} y), r_n}\circ f\|_{\mathrm{Lip}} \\
&\leq \int \left( \int \mathbbm{1}_{[0, (1 + \rho) r_n]}(d(f(S^i y), f(T^{i - i'} x))) \, d\mu(x) \cdot \int \mathbbm{1}_{[0, (1 + \rho) r_n]}(d(f(S^{i'} y), f(x))) \, d\mu(x) \right) d\eta(y) \\
&\quad + \frac{{L}^2}{\rho^2 r_n^2}  \theta_g.
\end{align*}

Note that here (H1) was applied with $\theta_{|i-i'|}$, which is bounded by $\theta_g$. This type of replacement will appear again later in the proof.\par\medskip

Moreover, using the mixing property together with~\eqref{eqapprox}, we obtain:

\begin{align}
&\iint \mathbbm{1}_{f^{-1}B(f(S^i y), r_n)}(T^i x) \cdot \mathbbm{1}_{f^{-1}B(f(S^{i'} y), r_n)}(T^{i'} x) \, d(\mu \otimes \eta)(x,y) \nonumber \\
&\leq \int \left( \int \mathbbm{1}_{f^{-1}B(f(S^i y), (1+\rho) r_n)}(x) \, d\mu(x) \cdot \int \mathbbm{1}_{f^{-1}B(f(S^{i'} y), (1+\rho) r_n)}(x) \, d\mu(x) \right) d\eta(y) \nonumber \\
&\quad + \frac{L^2}{\rho^2 r_n^2}  \theta_g \nonumber \\
&= \int f_*\mu\big(B(f(S^i y), (1+\rho) r_n)\big) \cdot f_*\mu\big(B(f(S^{i'} y), (1+\rho) r_n)\big) \, d\eta(y) + \frac{L^2}{\rho^2 r_n^2}  \theta_g. \label{eqmix1}
\end{align}

To estimate the term
\[
\int f_*\mu\big(B(f(S^i y), (1+\rho) r_n)\big) \cdot f_*\mu\big(B(f(S^{i'} y), (1+\rho) r_n)\big) \, d\eta(y),
\]

we begin by considering the difference
\begin{eqnarray*}
&&\int f_*\mu\big(B(f(S^i y), (1+\rho) r_n)\big) \cdot f_*\mu\big(B(f(S^{i'} y), (1+\rho) r_n)\big) \, d\eta(y) \\
&-& \int f_*\mu\big(B(f(S^i y), r_n)\big) \cdot f_*\mu\big(B(f(S^{i'} y), r_n)\big) \, d\eta(y),
\end{eqnarray*}
and then add and subtract an intermediate term to control the measure of the corresponding annuli:
\begin{eqnarray*}
&&\int f_*\mu\big(B(f(S^i y), (1+\rho) r_n)\big) \cdot f_*\mu\big(B(f(S^{i'} y), (1+\rho) r_n)\big) \, d\eta(y) \\
&-& \int f_*\mu\big(B(f(S^i y), r_n)\big) \cdot f_*\mu\big(B(f(S^{i'} y), r_n)\big) \, d\eta(y) \\
&\leq& \int f_*\mu\big(B(f(S^i y), (1+\rho) r_n)\big) \cdot \left[ f_*\mu\big(B(f(S^{i'} y), (1+\rho) r_n)\big) - f_*\mu\big(B(f(S^{i'} y), r_n)\big) \right] \, d\eta(y) \\
&+& \int f_*\mu\big(B(f(S^{i'} y), r_n)\big) \cdot \left[ f_*\mu\big(B(f(S^i y), (1+\rho) r_n)\big) - f_*\mu\big(B(f(S^i y), r_n)\big) \right] \, d\eta(y).
\end{eqnarray*}

 Note that, for every $z \in Y$, we have
\begin{align*}
f_*\mu\big(B(z,(1+\rho) r_n)\big) - f_*\mu\big(B(z,r_n)\big)
\leq f_*\mu\Big(B(z, r_n + \rho) \setminus B(z, r_n - \rho)\Big).
\end{align*}

Hence, by hypothesis~(H2) we obtain:

\begin{eqnarray*}
&&\int f_*\mu\big(B(f(S^i y), (1+\rho) r_n)\big) \cdot f_*\mu\big(B(f(S^{i'} y), (1+\rho) r_n)\big) \, d\eta(y) \\
&-& \int f_*\mu\big(B(f(S^i y), r_n)\big) \cdot f_*\mu\big(B(f(S^{i'} y), r_n)\big) \, d\eta(y) \\
&\leq& r_n^{-\xi} \rho^{\beta} \int \left[ f_*\mu\big(B(f(S^i y), (1+\rho) r_n)\big) + f_*\mu\big(B(f(S^{i'} y), r_n)\big) \right] \, d\eta(y).
\end{eqnarray*}

Using the invariance of \( \eta \) under \( S\), if \( \rho \) is sufficiently small we have

\begin{eqnarray}\label{des2}
&&\int f_*\mu\big(B(f(S^i y), (1+\rho) r_n)\big) \cdot f_*\mu\big(B(f(S^{i'} y), (1+\rho) r_n)\big) \, d\eta(y) \nonumber\\
&-& \int f_*\mu\big(B(f(S^i y), r_n)\big) \cdot f_*\mu\big(B(f(S^{i'} y), r_n)\big) \, d\eta(y) \nonumber\\
&\leq&2r^{-\xi}_n\rho^{\beta} \int f_*\mu(B(f(y),2r_n)) \ d\eta(y)
\end{eqnarray}

Therefore, combining inequalities~\eqref{eqmix1} and~\eqref{des2}, we deduce
\begin{eqnarray}\label{des3}
&& \iint \mathbbm{1}_{f^{-1}B(f(S^i y), r_n)}(T^i x)\, 
      \mathbbm{1}_{f^{-1}B(f(S^{i'} y), r_n)}(T^{i'} x)\, 
      d(\mu \otimes \eta)(x,y)  \nonumber \\[0.2cm]
&\leq& \frac{L^2}{\rho^2 r_n^2} \, \theta_g 
\;+\; 2 r_n^{-\xi} \rho^{\beta} 
      \int f_* \mu\!\left( B(f(y), 2r_n) \right) \, d\eta(y) \\[0.2cm]   
&&\;+\; \int f_* \mu\!\left( B(f(S^i y), r_n) \right)\,
         f_* \mu\!\left( B(f(S^{i'} y), r_n) \right)\, d\eta(y) \nonumber
\end{eqnarray}

We now estimate
\[
\int f_* \mu\big( B(f(S^i y), r_n) \big)\,
     f_* \mu\big( B(f(S^{i'} y), r_n) \big)\, d\eta(y).
\]
Let \( \psi(y) = f_* \mu\big( B(f(y), r_n) \big) \). 
Using the $S$-invariance of \( \eta \) and applying the mixing hypothesis~(H1) to the function \( \psi \), we obtain
\[
\int \psi(S^i y)\, \psi(S^{i'} y)\, d\eta(y)
\;\leq\;
\left( \int \psi(y)\, d\eta(y) \right)^2
\;+\;
\theta_g\, \|\psi\|_{\mathrm{Lip}}^{2}.
\]

Applying Lemma~\ref{HA} and returning to the original expression in
inequality~\eqref{des3}, we infer
\begin{eqnarray*}
&& \iint \mathbbm{1}_{f^{-1}B(f(S^i y), r_n)}(f(T^i x)) \, \mathbbm{1}_{f^{-1}B(f(S^{i'} y), r_n)}(f(T^{i'} x)) \, d(\mu \otimes \eta)(x, y) \\
&\leq& \dfrac{L^2}{\rho^2 r_n^2} \, \theta_g 
+ 2 r_n^{-\xi} \rho^{\beta} \int f_* \mu\big( B(f( y), 2r_n) \big) \, d\eta(y) \\
&& + c^2 \theta_g \, r_n^{-2\zeta} 
+ \left( \int f_* \mu\big( B(f(y), r_n) \big) \, d\eta(y) \right)^2.
\end{eqnarray*}

We now set this estimate aside and proceed with the analysis of \(\var(Q^f_n)\) in Case~2.

\medskip

\noindent\textbf{Case 2:} \( |i - i'| \leq g \).  
For this case, the boundedness of the indicator function and the invariance of \( \mu \) under \( T \) and of \( \eta \) under \( S \) yield:
\begin{eqnarray*}
& &\iint \mathbbm{1}_{f^{-1}B(f(S^i y),r_n)}(T^i x)\mathbbm{1}_{f^{-1}B(f(S^{i'} y),r_n)}(T^{i'} x)\, d(\mu \otimes \eta)(x,y)\\
&\leq &\iint \mathbbm{1}_{f^{-1}B(f(S^i y),r_n)}(T^i x)\, d(\mu \otimes \eta)(x,y)\\
&= &\int f_*\mu(B(f(S^i y),r_n)) \, d\eta(y)= \int f_*\mu(B(f(y),r_n)) \, d\eta(y).
\end{eqnarray*}

We are now ready to estimate the double sum in~\eqref{varestimate}.  
Combining Cases~1 and~2, we arrive at

\begin{align}\label{duas-somas}
&\sum_{i,i'=0}^{n-1}\iint \mathbbm{1}_{f^{-1}B(f(S^i y),r_n)}(T^i x)\mathbbm{1}_{f^{-1}B(f(S^{i'} y),r_n)}(T^{i'} x)\, d(\mu \otimes \eta)(x,y) \\
&\leq n^2 L^2 \rho^{-2} r_n^{-2} \theta_g 
+ 2n^2 r_n^{-\xi} \rho^{\beta} \int f_*\mu(B(f(y), 2r_n)) \, d\eta(y) \nonumber\\
&\quad + c^2n^2 \theta_g r_n^{-2\zeta} 
+ n^2 \left( \int f_*\mu(B(f(y), r_n)) \, d\eta(y) \right)^2 + 2ng \int f_*\mu(B(f(y), r_n)) \, d\eta(y).\nonumber
\end{align}

We now proceed to estimate $\mu \otimes \eta \bigl\{ (x, y) : \widehat{m}^f_n(x, y) \ge r_n \bigr\}$
using inequality \eqref{eqmnvarsn}.  
Combining inequalities~\eqref{varestimate} and~\eqref{duas-somas}, and recalling
that 
    \begin{equation*}
\E(Q^f_n)=n\int f_{\ast}\mu\left(B(z,r_n)\right)d(f_{\ast}\eta)(z),
\end{equation*}
 we obtain

\begin{align}\label{rhs}
\frac{\operatorname{Var}(Q_n^f)}{\mathbb{E}(Q_n^f)^2}
&\leq 
\frac{
n^2 L^2 \rho^{-2} r_n^{-2} \theta_g
+ c^2n^2 \theta_g r_n^{-2\zeta}
+ 2n^2 r_n^{-\xi} \rho^{\beta} \int f_*\mu(B(f(y), 2r_n)) \, d\eta(y)
}{
\left(n \int f_*\mu(B(f(y), r_n)) \, d\eta(y)\right)^2
} \nonumber\\
&\quad +\frac{\mathbb{E}(Q_n^f)^2}{\mathbb{E}(Q_n^f)^2}+
\frac{
2n g \int f_*\mu(B(f(y), r_n)) \, d\eta(y)
}{
\left(n \int f_*\mu(B(f(y), r_n)) \, d\eta(y)\right)^2
}-\frac{\mathbb{E}(Q_n^f)^2}{\mathbb{E}(Q_n^f)^2}.
\end{align}

\medskip
Our goal now is to estimate each term on the right-hand side of the inequality above.

We begin with the third term. 
By the definition of \( \overline{C}^{f}_{\mu,\eta} \), we have that for \( n \) large enough,
\begin{equation}\label{rn}
r_n^{\overline{C}^{f}_{\mu,\eta} + \epsilon} \ \leq \ \int f_* \mu\left( B\left( f(y), r_n \right) \right) \, d\eta(y)\leq  \int f_* \mu\left( B\left( f(y), 2r_n \right) \right) \, d\eta(y) \ \leq \ 1.
\end{equation}
Therefore
\begin{eqnarray*}
\frac{2n^2 r_n^{-\xi} \rho^{\beta} \int f_*\mu(B(f(y), 2r_n)) \, d\eta(y)}{
\left(n \int f_*\mu(B(f(y), r_n)) \, d\eta(y)\right)^2}
\leq
\frac{2 r_n^{-\xi} \rho^{\beta}}{r_n^{2(\overline{C}^{f}_{\mu,\eta}+ \epsilon)}}. 
\end{eqnarray*}

Replacing $r_n = e^{-k_n}$, with $k_n$ as defined at the beginning of this proof, we deduce

\begin{eqnarray*}
\frac{2n^2 r_n^{-\xi} \rho^{\beta} \int f_*\mu(B(f(y), 2r_n)) \, d\eta(y)}{
\left(n \int f_*\mu(B(f(y), r_n)) \, d\eta(y)\right)^2}
&\leq& 2 \rho^{\beta} n^{\xi / (\overline{C}^{f}_{\mu,\eta} + \epsilon)} 
(\log n)^{\frac{b \xi}{\overline{C}^{f}_{\mu,\eta} + \epsilon}} 
n^2 (\log n)^{2b}.
\end{eqnarray*}

Now we set $\rho = n^{-\delta}$ and choose
\begin{equation}\label{delta}
\delta = \frac{2}{\beta} + \frac{\xi}{\beta \left( \overline{C}^{f}_{\mu,\eta} + \epsilon \right)}+ \frac{\xi}{ \left( \overline{C}^{f}_{\mu,\eta} + \epsilon \right)}.
\end{equation}
It follows that
\begin{eqnarray}\label{term32}
\frac{2n^2 r_n^{-\xi} \rho^{\beta} \int f_*\mu(B(f(y), 2r_n)) \, d\eta(y)}{
\left(n \int f_*\mu(B(f(y), r_n)) \, d\eta(y)\right)^2}
&\leq&
2 (\log n)^{\, b\left(2 + \frac{\xi}{\overline{C}^{f}_{\mu,\eta} + \epsilon} \right)}.
\end{eqnarray}
Next we analyze the first term on the right-hand side of inequality~\eqref{rhs}. Using again~\eqref{rn}, we obtain
\begin{equation*}
\frac{n^2 L^2 \rho^{-2} r_n^{-2} \theta_g}{\left(n \int f_{\ast} \mu \left( B(f(y), r_n) \right) \, d\eta(y)\right)^2}
\ \leq \
L^2 \rho^{-2} \, r_n^{-2} \theta_g  r_n^{-2(\overline{C}^f_{\mu,\eta} + \varepsilon)}.
\end{equation*}
By substituting the definitions of \( r_n \) and \( k_n \) into the expression above, it follows that
\begin{eqnarray}\label{term1a}
\frac{n^2 L^2 \rho^{-2} r_n^{-2} \theta_g}{
\left(n \int f_{\ast} \mu\!\left( B(f(y), r_n) \right)\, d\eta(y)\right)^2}
&\leq&
L^2 \rho^{-2} \theta_g \,
n^{2} (\log n)^{2b} \,
n^{2/(\overline{C}^f_{\mu,\eta}+\eps)}
(\log n)^{\frac{2 b}{\overline{C}^f_{\mu,\eta}+\eps}}
\nonumber
\end{eqnarray}
Using the definitions \( g = \log(n^{\gamma}) \), \( \theta_g = (a^g)^{\alpha} \),
and \( \rho = n^{-\delta} \), we obtain
\[
\rho^{-2} \theta_g \, n^{2} \, n^{\frac{2}{\overline{C}^f_{\mu,\eta} + \varepsilon}}
= n^{2\delta + 2 + \frac{2}{\overline{C}^f_{\mu,\eta} + \varepsilon}} 
   \cdot \left( a^{\log(n^{\gamma})} \right)^{\alpha}
= n^{2\delta + 2 + \frac{2}{\overline{C}^f_{\mu,\eta} + \varepsilon} + \alpha \gamma \log a}.
\]

Since Hypothesis~(H1) ensures that \(\alpha>0\) and \(\log a < 0\), and recalling
the definition of \(\delta\) given in~\eqref{delta}, choosing
\begin{equation}\label{gamma}
\gamma >
\frac{1}{\alpha \lvert \log a \rvert}
\left(
\frac{4}{\beta}
+ \frac{2\xi}{\beta\bigl(\overline{C}^f_{\mu,\eta}+\varepsilon\bigr)}
+ \frac{2\xi}{\overline{C}^f_{\mu,\eta}+\varepsilon}
+ 2
+ \frac{2}{\overline{C}^f_{\mu,\eta}+\varepsilon}
\right),
\end{equation}
yields
\[
2\delta + 2 + \frac{2}{\overline{C}^f_{\mu,\eta} + \varepsilon}
+ \alpha \gamma \log a < 0,
\]
and we conclude that
\begin{equation}\label{term1}
\frac{n^2 L^2 \rho^{-2} r_n^{-2} \theta_g}{\left( n \int f_{\ast} \mu \left( B(f(y), r_n) \right) \, d\eta(y) \right)^2}
\leq
L^2 (\log n)^{2b \left(1 + \frac{1}{\overline{C}^f_{\mu,\eta} + \varepsilon} \right)}.
\end{equation}
Let us now turn to the second term on the right-hand side of inequality~\eqref{rhs}. 
Using again~\eqref{rn} and the definitions of $r_n$ and $k_n$, we obtain
\begin{eqnarray*}
\frac{c^2 n^2 \theta_g r_n^{-2\zeta}}{\left(n\int f_{\ast} \mu \left( B(f(y), r_n) \right) \, d\eta(y)\right)^2}
&\leq& 
c^2 \theta_g \, 
n^{2}(\log n)^{2b} \,
n^{\frac{2\zeta}{\overline{C}^f_{\mu,\eta}+\varepsilon}}
(\log n)^{\frac{2 b \zeta}{\overline{C}^f_{\mu,\eta}+\varepsilon}}.
\end{eqnarray*}
By our choice of $\gamma$ in~\eqref{gamma}, we have
\[
\gamma > \frac{1}{\alpha |\log a|}
\left( 2 + \frac{2\zeta}{\overline{C}^f_{\mu,\eta} + \varepsilon} \right).
\]
Therefore, we obtain
\begin{equation}\label{term2}
\frac{c^2 n^2 \theta_g r_n^{-2\zeta}}{
\left(n \int f_{\ast} \mu\left(B(f(y), r_n)\right) \, d\eta(y)\right)^2}
\leq
c^2 (\log n)^{2b\left(1 + \frac{\zeta}{\overline{C}^f_{\mu,\eta} + \varepsilon}\right)}.
\end{equation}

For the remaining term, we use inequality~\eqref{rn} and the identity
\[
r_n^{\overline{C}^f_{\mu,\eta} + \varepsilon} = \frac{1}{n(\log n)^b},
\]
to get
\begin{equation}\label{term4a}
\frac{2n g \int f_{*} \mu\!\left(B(f(y), r_n)\right) \, d\eta(y)}{
\left(n \int f_{*} \mu\!\left(B(f(y), r_n)\right) \, d\eta(y)\right)^2}
\leq
\frac{2g}{n\, r_n^{\overline{C}^f_{\mu,\eta} + \varepsilon}}\leq 2\gamma (\log n)^{1 + b}.
\end{equation}

Taking $b < -2$, and substituting \eqref{term32}, \eqref{term1}, \eqref{term2}, and \eqref{term4a} into \eqref{rhs}, and then recalling  inequality \eqref{eqmnvarsn}, we obtain

\[\mu \otimes \eta \left((x, y) : \widehat{m}^f_n(x, y)\geq r_n\right)\leq\frac{\var(Q^f_n)}{\E(Q^f_n)^2}\leq \mathcal{O}((\log n)^{-1}).\]
Following the same steps as in the proof of Theorem~2, 
and using the same subsequence $n_\ell=\lceil e^{\ell^2}\rceil$ together with the Borel-Cantelli argument, we conclude
\[
\underset{\ell \rightarrow \infty}{\underline{\lim}}
\frac{\log \widehat{m}_{n_{\ell}}^f(x,y)}{-\log n_{\ell}}
\ge 
\frac{1}{\overline{C}^f_{\mu,\eta}+\epsilon}.
\]

Since
\[
\underset{n\rightarrow+\infty}{\underline\lim}
\frac{\log \widehat{m}^f_n(x,y)}{-\log n}
=
\underset{\ell \rightarrow+\infty}{\underline\lim}
\frac{\log \widehat{m}^f_{n_\ell}(x,y)}{-\log n_\ell},
\]
the theorem follows, as $\epsilon$ may be chosen arbitrarily small.

\end{proof}


\section{Shortest distance between orbits for random dynamical systems}\label{sectionshortestrandom}

Let $X \subset \mathbb{R}^n$ and let $(\Omega, B(\Omega), \mathbb{P}, \vartheta)$ be a probability measure preserving system, 
where $\Omega$ is a metric space and $B(\Omega)$ its Borel $\sigma$-algebra. 
We now introduce the notion of a random dynamical system acting on $X$ over the base $(\Omega, B(\Omega), \mathbb{P}, \vartheta)$.

\begin{definition}
A random dynamical system \(\textbf{T} = (T_{\omega})_{\omega \in \Omega}\) on \(X\) over \((\Omega, B(\Omega), \mathbb{P}, \vartheta)\) is generated by maps \(T_\omega\) such that \((\omega, x) \in \Omega \times X \mapsto T_\omega(x)\) is measurable. The map \(\mathcal{T}: \Omega \times X \to \Omega \times X\) defined by \(\mathcal{T}(\omega, x) = (\vartheta(\omega), T_\omega(x))\) is the dynamics of the random dynamical system generated by \(\textbf{T}\) and is called the \emph{skew-product}.
\end{definition}

We are particularly interested in invariant measures for the skew-product dynamics.

\begin{definition}
A probability measure \(\mu\) on the product space \(\Omega \times X\) is said to be an \emph{invariant measure} for the random dynamical system \(\textbf{T}\) if it satisfies:
\begin{itemize}
\item[1.] \(\mu\) is \(\mathcal{T}\)-invariant,
\item[2.] \(\pi_* \mu = \mathbb{P}\),
\end{itemize}
where \(\pi: \Omega \times X \to \Omega\) is the canonical projection.
\end{definition}
Visually, this relationship can be represented by the following commutative diagram:
\[
\begin{tikzcd}
(\omega, x) \arrow[r, "\mathcal{T}"] \arrow[d, "\pi"'] 
  & (\vartheta(\omega), T_\omega(x)) \arrow[d, "\pi"] \\[4pt]
\omega \arrow[r, "\vartheta"'] 
  & \vartheta(\omega)
\end{tikzcd}
\]

In this setting, the probability measure \(\mu\) can be disintegrated over \(\mathbb{P}\) (see the Regular Conditional Probability Theorem, \cite{kiferliu}): there exists a family of probability measures \(\mu_\omega\), depending measurably on \(\omega\), such that for any bounded function \(f\) on \(X\), the following holds:
\[
\int f \, d\mu = \int \left( \int f(x) \, d\mu_\omega(x) \right) d\mathbb{P}(\omega).
\]
Let \(\bar{\mu}(\cdot) = \int \mu_\omega(\cdot) \, d\mathbb{P}(\omega)\) denote the second marginal of \(\mu\).


We now introduce a second family of random maps \(\textbf{S} = (S_{\omega})_{\omega \in \Omega}\), defined on the same space \(X\) over \((\Omega, B(\Omega), \mathbb{P}, \tilde{\vartheta})\). The associated skew-product is \(\mathcal{S}: \Omega \times X \to \Omega \times X\), defined by
\[
\mathcal{S}(\omega, x) = (\tilde{\vartheta}(\omega), S_{\omega}(x)),
\]
and the {invariant measure} for the random dynamical system \(\textbf{S}\) is denoted by \(\eta\). With these notations, let \(\bar{\eta}(\cdot) = \int \eta_\omega(\cdot) \, d\mathbb{P}(\omega)\) denote the second marginal of \(\eta\).

\begin{definition}
We define the \emph{shortest distance between two different random orbits} as

\[
\widehat{m}_n^{\omega, \tilde{\omega}}(x,y) = \min_{i = 0, \dots, n-1} \, d\left(T_\omega^i(x), \, S_{\tilde{\omega}}^i(y)\right),
\]
where, for all \(\omega \in \Omega\), the iterates of \(T_\omega\) are the random composition of  \(T_\omega\)'s defined as:
\[
T_\omega^0 = \mathrm{Id}, \quad
T_\omega^1 = T_\omega, \quad
T_\omega^2 = T_{\vartheta(\omega)} \circ T_\omega,
\]
and more generally,
\[
T_\omega^n = T_{\vartheta^{n-1}(\omega)} \circ \cdots \circ T_{\vartheta(\omega)} \circ T_\omega.
\]
Similarly, \(S_{\tilde{\omega}}^n\) are the random compositions of the maps \(S_{\tilde{\omega}}\).
\end{definition}

To obtain a result that links the shortest distance between two different orbits and random dynamical systems we need to assume a hypothesis on the invariant measure and on the (annealed) decay of correlations of the random dynamical system.
Namely,
\begin{description}
  \item[(a)] There exist $r_0>0, \ \xi \geq 0$ and $\beta\geq 1$ such that for almost every $y \in X$ and any $r_0>r> \rho>0,$
$$\bar{\mu}(B(y,r+\rho)\backslash B(y,r-\rho)) \leq r^{-\xi}\rho^{\beta},$$
where  $\bar{\mu}$ is the second marginal of \(\mu\).

\item[(b)] (Annealed decay of correlations) $\forall n \in \mathbb{N}^*$, $\psi,\ \phi\in Lip(X,\R)$ we have,
$$\left|\int_{\Omega \times X} \psi (T^n_\omega(x))\phi(x) \ d\mu(\omega,x) - \int_{X} \psi \ d\bar{\mu} \int_{X} \phi \ d\bar{\mu} \right| \leq \|\psi \|_{Lip}\|\phi \|_{Lip} \theta_n$$
and
$$\left|\int_{\Omega \times X} \psi (S^n_{\tilde{\omega}}(x))\phi(x) \ d\eta(\tilde{\omega},x) - \int_{X} \psi \ d\bar{\eta} \int_{X} \phi \ d\bar{\eta} \right| \leq \|\psi \|_{Lip}\|\phi \|_{Lip} \theta_n$$
with $\theta_n =e^{-n}$.
\end{description}
The following theorem establishes the asymptotic behavior of the minimal distance between two random orbits when the random dynamical systems satisfy the regularity and decay conditions stated above.

\begin{theorem}\label{theoremrandom}
Let $\textbf{T}$ and $\textbf{S}$ be two random dynamical systems on a common space $X$, defined over $(\Omega, B(\Omega), \mathbb{P}, \vartheta)$ and $(\Omega, B(\Omega), \mathbb{P}, \tilde{\vartheta})$, respectively. Let $\mu$ and $\eta$ be invariant measures under the respective random dynamical systems $\textbf{T}$ and $\textbf{S}$ such that $\underline{C}_{\bar{\mu},\bar{\eta}}>0$. Then for $\mu \otimes \eta$-almost every $(\omega, x,\tilde{\omega},\tilde{x}) \in \Omega \times X \times \Omega \times X,$ if the random dynamical system satisfies assumptions $(a)$ and $(b)$, and if $C_{\bar{\mu},\bar{\eta}}$ exists, then
$$  \underset{n \rightarrow \infty}{\lim}\frac{\log \widehat{m}_n^{\omega, \tilde{\omega}}(x,\tilde{x})}{-\log n} = \frac{1}{C_{\bar{\mu},\bar{\eta}}}.$$

\end{theorem}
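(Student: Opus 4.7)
The plan is to reduce Theorem \ref{theoremrandom} to a direct application of Theorem \ref{thprinc} via the skew-product construction, with the canonical projection playing the role of the observation. Consider the two measure-preserving systems $(\Omega \times X, \mu, \mathcal{T})$ and $(\Omega \times X, \eta, \mathcal{S})$, and take as observation the canonical projection $f : \Omega \times X \to X$, $f(\omega, x) = x$, which is $1$-Lipschitz for the standard product metric on $\Omega \times X$. Since $f(\mathcal{T}^i(\omega,x)) = T_\omega^i(x)$ and $f(\mathcal{S}^i(\tilde\omega, \tilde x)) = S_{\tilde\omega}^i(\tilde x)$, the quenched minimal distance coincides with the observed minimal distance of Section~\ref{secshort}:
\[
\widehat{m}_n^{\omega, \tilde\omega}(x, \tilde x) = \widehat{m}_n^f\bigl((\omega, x),(\tilde\omega, \tilde x)\bigr).
\]
Moreover $f_*\mu = \bar\mu$ and $f_*\eta = \bar\eta$, so $C^f_{\mu,\eta} = C_{\bar\mu, \bar\eta}$, and the assumption $\underline{C}_{\bar\mu,\bar\eta} > 0$ implies $\underline{C}^f_{\mu,\eta} > 0$, which is what is needed to invoke Theorem \ref{thineq}.

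The hypotheses of Theorem \ref{thprinc} must then be checked for this skew-product setup. Condition (H2) for $f_*\mu = \bar\mu$ is an immediate restatement of assumption (a). For (H1) the situation is subtler: the annealed decay (b) does not directly yield a full Lipschitz decay of correlations on $\Omega \times X$. However, inspection of the proof of Theorem \ref{thprinc} reveals that the mixing hypothesis is only invoked on Lipschitz functions that factor through the observation, namely the cut-offs $\varphi_{f(S^i y), r_n} \circ f$ approximating indicators of small balls, and the Lipschitz function $y \mapsto f_*\mu(B(f(y), r_n))$ provided by Lemma \ref{HA}. For the projection $f$ every such function depends only on the $X$-coordinate, and the required estimate takes the form
\[
\left| \int_{\Omega \times X} \psi(x)\,\phi(T_\omega^n x)\,d\mu(\omega, x) - \int_X \psi\, d\bar\mu \int_X \phi\, d\bar\mu \right| \le \|\psi\|_{\Lip} \|\phi\|_{\Lip} \theta_n,
\]
which is precisely (b), and analogously for $\mathcal{S}$ and $\eta$. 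Since $\theta_n = e^{-n}$ corresponds to the case $\alpha = 1$, $a = e^{-1}$ of the stretched-exponential rate allowed by (H1), the quantitative ingredients match.

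With these identifications the proof of Theorem \ref{thprinc} goes through verbatim: Lemma \ref{HA} applied to $\bar\mu$ yields that $(\omega, x) \mapsto \bar\mu(B(f(\omega, x), r))$ is Lipschitz with constant $c r^{-\zeta}$ (it factors through $f$), the covariance decomposition into the regimes $|i-i'| > g$ and $|i-i'| \le g$ produces the same bounds, and the Borel-Cantelli argument along the subsequence $n_\ell = \lceil e^{\ell^2} \rceil$ delivers the lower bound. Combined with Theorem \ref{thineq} for the upper bound, one concludes
\[
\lim_{n \to \infty} \frac{\log \widehat{m}_n^{\omega, \tilde\omega}(x, \tilde x)}{-\log n} = \frac{1}{C_{\bar\mu, \bar\eta}}
\]
for $\mu \otimes \eta$-almost every $((\omega, x),(\tilde\omega, \tilde x))$. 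The single step deserving careful attention — and the only genuine subtlety — is confirming that no observable outside the class $\{\varphi \circ f : \varphi \in \Lip(X,\R)\}$ is ever required by the proof of Theorem \ref{thprinc}, so that the weaker annealed hypothesis (b) truly suffices; everything else amounts to routine relabelling.
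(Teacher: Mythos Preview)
Your proposal is correct and follows the same route as the paper: reduce to the skew-product systems $(\Omega\times X,\mu,\mathcal{T})$ and $(\Omega\times X,\eta,\mathcal{S})$ with the projection $f(\omega,x)=x$ as observation, identify $\widehat{m}_n^{\omega,\tilde\omega}(x,\tilde x)=\widehat{m}_n^f$ and $f_*\mu=\bar\mu$, $f_*\eta=\bar\eta$, then invoke Theorems~\ref{thineq} and~\ref{thprinc}. The paper's proof is terser and simply asserts the applicability of those theorems, whereas you explicitly address the one genuine subtlety the paper leaves implicit: hypothesis~(b) does not give the full~(H1) on $\Omega\times X$, but the proof of Theorem~\ref{thprinc} only ever applies~(H1) to observables of the form $\psi\circ f$ with $\psi\in\Lip(X,\R)$, for which~(b) is exactly what is needed.
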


\begin{proof}
We follow the approach proposed in \cite{Rousseau}. 
The proof of this theorem relies on Theorems~\ref{thineq} and~\ref{thprinc}, 
applied to the dynamical systems 
\((\Omega \times X, B(\Omega \times X), \mu, \mathcal{T})\) and 
\((\Omega \times X, B(\Omega \times X), \eta, \mathcal{S})\), 
together with the observation function \(f\) defined as
\[
f : \Omega \times X \to X, \quad (\omega, x) \mapsto x.
\]

With this observation, for all $z, t \in \Omega \times X$, we can relate the shortest distance between two different observed orbits to the shortest distance between two different random orbits.  
Let $z = (\omega, x)$ and $t = (\tilde{\omega}, \tilde{x})$. Then,
\begin{eqnarray*}
\widehat{m}_n^f(z,t) 
&=& \min_{i = 0, \ldots, n-1} 
\left( d\left( f\left(\mathcal{T}^i(\omega, x)\right), f\left(\mathcal{S}^i(\tilde{\omega}, \tilde{x})\right) \right) \right) \\
&=& \min_{i = 0, \ldots, n-1} 
\left( d\left( T_\omega^i(x), S_{\tilde{\omega}}^i(\tilde{x}) \right) \right) \\
&=& \widehat{m}_n^{\omega, \tilde{\omega}}(x, \tilde{x}).
\end{eqnarray*}

Moreover, we can identify the pushforward measures as 
$f_*\mu = \bar{\mu}$ and $f_*\eta = \bar{\eta}$.  
Therefore, in view of the definition of the symmetric Rényi divergence, we obtain
\[
{C}_{\mu,\eta}^f = {C}_{\bar{\mu}, \bar{\eta}}.
\]
This completes the proof.

\end{proof}


We will present an example of a random dynamical system for which we can apply the previous statement.

\subsection*{Example: Non-i.i.d. random dynamical system}

Let $\Omega = [0,1]$ and $X = \mathbb{T}^1$ denote the one-dimensional torus, both endowed with the Lebesgue measure.

We consider two measurable maps 
\[
\theta: \Omega \to \Omega 
\quad \text{and} \quad 
\tilde{\theta}: \Omega \to \Omega
\]
defined by
\[
\theta(\omega)=
\begin{cases}
2\omega, & \text{if } \omega \in [0,1/2),\\[0.3em]
2\omega - 1, & \text{if } \omega \in [1/2,1],
\end{cases}
\qquad
\tilde{\theta}(\tilde{\omega})=
\begin{cases}
3\tilde{\omega}, & \text{if } \tilde{\omega} \in [0,1/3),\\[0.3em]
3\tilde{\omega} - 1, & \text{if } \tilde{\omega} \in [1/3,2/3),\\[0.3em]
3\tilde{\omega} - 2, & \text{if } \tilde{\omega} \in [2/3,1].
\end{cases}
\]

For each $\omega, \tilde{\omega} \in \Omega$, we define the linear maps on $X$
\[
T_\omega: X \to X, \quad x \mapsto 2x, 
\qquad 
S_{\tilde{\omega}}: X \to X, \quad x \mapsto 3x,
\]
both preserving the Lebesgue measure on $\mathbb{T}^1$.

The corresponding random dynamical systems are described by the skew products
\begin{eqnarray*}
\mathcal{T}: \Omega \times X &\to& \Omega \times X, \\
(\omega,x) &\mapsto& (\theta(\omega), T_\omega(x)),
\end{eqnarray*}
and
\begin{eqnarray*}
\mathcal{S}: \Omega \times X &\to& \Omega \times X, \\
(\tilde{\omega},x) &\mapsto& (\tilde{\theta}(\tilde{\omega}), S_{\tilde{\omega}}(x)).
\end{eqnarray*}

Both $\mathcal{T}$ and $\mathcal{S}$ are $\mathrm{Leb} \otimes \mathrm{Leb}$-invariant, 
where $\mathrm{Leb}$ denotes the Lebesgue measure.
 
It is straightforward to verify that the Lebesgue measure satisfies condition (a).  
Moreover, by \cite{Baladi}, the skew products $\mathcal{T}$ and $\mathcal{S}$ exhibit exponential decay of correlations.  
Since in this example $C_{\bar{\mu}, \bar{\eta}} = 1$ with $\bar{\mu} = \bar{\eta}=\mathrm{Leb}$, Theorem~\ref{theoremrandom} implies that for $\mathrm{Leb}^{\otimes 4}$-almost every 
$(\omega, x, \tilde{\omega}, \tilde{x}) \in [0,1] \times \mathbb{T}^1 \times [0,1] \times \mathbb{T}^1$,
\[
\lim_{n \to \infty} 
\frac{\log \widehat{m}_n^{\omega, \tilde{\omega}}(x,\tilde{x})}{-\log n}
= 1.
\]

\medskip \subsection*{Acknowledgements}The authors would like to thank Jérôme Rousseau for his helpful comments on an earlier version of this manuscript.

\medskip

\end{document}